\documentclass[12pt]{article}

\usepackage{tikz}
\usepackage{subfigure}
\usepackage[english]{babel}
\usepackage{amsfonts,amssymb,amsmath,latexsym,amsthm}
\usepackage{multirow}

\topmargin  = -0.4 in \oddsidemargin = 0.25 in
\setlength{\textheight}{8.5in} \setlength{\textwidth}{6in}
\setlength{\unitlength}{1.0 mm}

\newtheorem{thm}{Theorem}[section]

\newtheorem{lem}[thm]{Lemma}
\newtheorem{prop}[thm]{Proposition}

\newtheorem{claim}{Claim}

\begin{document}

\title{Minimum degree of 3-graphs without long linear paths
\thanks{The work was supported by NNSF of China (No. 11671376),  NSF of Anhui Province (No. 1708085MA18), and Anhui Initiative in Quantum Information Technologies (AHY150200).}
}
\author{Yue Ma$^a$, \quad Xinmin Hou$^b$,\quad Jun Gao$^c$\\
\small $^{a,b,c}$ Key Laboratory of Wu Wen-Tsun Mathematics\\
\small School of Mathematical Sciences\\
\small University of Science and Technology of China\\
\small Hefei, Anhui 230026, China.
}

\date{}

\maketitle

\begin{abstract}
A well known theorem in graph theory states that every graph $G$ on $n$ vertices and minimum degree at least $d$ contains a path of length at least $d$, and if  $G$ is connected and  $n\ge 2d+1$ then $G$ contains a path of length at least $2d$ (Dirac, 1952). In this article, we give an extension of Dirac's result to hypergraphs. We determine asymptotic lower bounds of the minimum degrees of 3-graphs to guarantee linear paths of specific lengths, and the lower bounds are tight up to a constant.
\end{abstract}

\section{Introduction}
An {\it $r$-uniform hypergraph (or $r$-graph for short)} is a pair $H=(V, E)$, where $V$ is a set of elements called vertices, and $E$ is a collection of subsets of $V$ with uniform size $r$ called edges.
In this article, all $r$-graphs $H$ considered are simple, i.e. $H$ contains no multiple edges.
We call $|V|$ {\it the order} of $H$ and $|E|$ {\it the size} of $H$, also denoted by $|H|$ or $e(H)$.  We write graph for $2$-graph for short.
A {\it linear $k$-path} (or a {\it linear path of length $k$}), denoted by $P_k$, is a collection of $k$ edges $\{e_1,e_2,...,e_k\}$ such that $|e_i\cap e_j|=1$ if $|i-j|=1$ and $e_i\cap e_j=\emptyset$ otherwise. Given $S\subseteq V(H)$, the {\it degree} of $S$, denote by $d_H(S)$, is the number of edges of $H$ containing $S$. The minimum $s$-degree $\delta_s(H)$ of $H$ is the minimum of $d_H(S)$ over all $S\subseteq V(H)$ of size $s$.
We call $\delta_1(H)$ the \emph{minimum degree} of $H$, that is $\delta_1(H)=\min\{d_H(v) : v\in V(H)\}$. Let $N_H(S)=\{ T : S\cup T\in E(H)\}$.
Given two $r$-graphs $H$ and $F$, we say $H$ is $F$-free if $H$ contains no subgraph isomorphic to $F$. Given two integers $a,b$ with $a<b$, write $[a,b]$ for the set $\{a, a+1, \ldots, b\}$.




The following results are well known in graph theory related to minimum degree and the lengths of paths in a graph, two of them were due to Dirac.
Note that, for a graph $G$, we write a path for a linear path and $\delta(G)$ for $\delta_1(G)$.
\begin{thm}\label{THM: Dirac52}
Let $G$ be a graph on $n$ vertices with minimum degree $d$.

(i) $G$ contains a path of length at least $d$.

(ii) (Dirac 1952, Theorem 3 in~\cite{Dirac52}) If $G$ is connected and  $n\le 2d$ then $G$ contains a path of length at least $n$ (i.e. a Hamiltonian path).

(iii) (Dirac 1952, Theorem 4 in~\cite{Dirac52}) If $G$ is connected and  $n\ge 2d+1$ then $G$ contains a path of length at least $2d$.
\end{thm}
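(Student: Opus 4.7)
For part (i), I would argue by taking a longest path. Let $P = v_0 v_1 \cdots v_k$ be a longest path in $G$. If $v_0$ had a neighbor $u \notin V(P)$, then $u v_0 v_1 \cdots v_k$ would be a longer path, a contradiction. Hence $N(v_0) \subseteq \{v_1, \ldots, v_k\}$, so $k \ge d(v_0) \ge d$, and $P$ has length at least $d$.

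For parts (ii) and (iii), my plan is to upgrade the longest-path observation above into a cycle-finding lemma: if $G$ has a longest path $P = v_0 v_1 \cdots v_k$ with $k < 2d$, then there is a cycle whose vertex set is exactly $V(P)$. To prove this, define
\[
A = \{ i \in [0, k-1] : v_0 v_{i+1} \in E(G)\}, \qquad B = \{ i \in [0, k-1] : v_i v_k \in E(G)\}.
\]
Since all neighbors of $v_0$ and $v_k$ lie on $P$ (same reason as in (i)), we have $|A|, |B| \ge d$, and both sit in a ground set of size $k < 2d$; pigeonhole gives $A \cap B \ne \emptyset$. For any $i \in A \cap B$, the sequence $v_0 v_{i+1} v_{i+2} \cdots v_k v_i v_{i-1} \cdots v_1 v_0$ is a cycle $C$ on $V(P)$.

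Given this lemma, both (ii) and (iii) follow by a short connectivity argument. For (iii), suppose for contradiction that the longest path $P$ has length $k < 2d$; the lemma produces a cycle $C$ with $|V(C)| = k+1 \le 2d < n$, so some vertex $u \notin V(C)$ exists, and connectedness supplies an edge $uw$ with $w \in V(C)$. Starting at $u$, going to $w$, and traversing $C$ gives a path of length $k+1$, contradicting the maximality of $P$; hence $k \ge 2d$. For (ii), part (i) and $n \le 2d$ give $d \le k \le n-1 < 2d$, so the lemma still applies; if $V(C) \ne V(G)$, the same extension contradicts maximality, forcing $k+1 = n$ and $P$ to be Hamiltonian.

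The only genuinely substantive step is the rotation-style $A \cap B$ pigeonhole that converts a long path into a cycle on the same vertex set; everything else is bookkeeping. The point requiring most care is the exact indexing (the pairing of $v_0 v_{i+1}$ with $v_i v_k$), because it is precisely what guarantees that the resulting cycle spans \emph{all} of $V(P)$ rather than a proper subset --- without this, the subsequent connectivity argument would fail to produce a strictly longer path.
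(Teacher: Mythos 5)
The paper does not prove Theorem~\ref{THM: Dirac52} at all; it is quoted as a known result of Dirac (1952) and used as motivation, so there is no in-paper argument to compare against. Your proposal is the standard and correct proof: part (i) via the longest-path observation, and parts (ii)--(iii) via the crossing-chords pigeonhole (the sets $A$ and $B$ each have size at least $d$ inside $[0,k-1]$ of size $k<2d$, so they meet, and the index pairing $v_0v_{i+1}$ with $v_iv_k$ indeed yields a cycle spanning all of $V(P)$), followed by the connectivity extension. Two small points of hygiene: when $V(C)\ne V(G)$ you should pick $u\notin V(C)$ that is \emph{adjacent} to $V(C)$ (connectedness guarantees such a $u$ exists, but an arbitrary vertex outside $C$ need not send an edge into $C$); and the degenerate case $k=1$ (where the ``cycle'' is a doubled edge) should be dispatched separately, though it is trivial since a connected graph on $n\ge 3$ vertices always has a path of length $2$. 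Neither affects the substance; the argument is sound.
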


The famous Erd\H{o}s-Gallai Theorem~\cite{Erdos-Gallai59} states that every graph on $n$ vertices and $\frac{(k-1)n}2$ edges contains a path of length $k$, this can be viewed as an average degree version of Theorem~\ref{THM: Dirac52}. Erd\H{o}s-Gallai Thoerem was improved later by Faudree and Schelp~\cite{Faudree-Schelp75}, and the connected version was given by Balister, Gy\H{o}ri, Lehel, and Schelp~\cite{BGLS08} in 2008. An hypergraph extension of Erd\H{o}s-Gallai Theorem was solved by Gy\H{o}ri, Katona, and Lemons~\cite{GKL16} and Davoodi, Gy\H{o}ri, Methuku, and Tompkins~\cite{DGMT-18}. Motivated by these results, in this article, we give a hypergraph  version of Theorem~\ref{THM: Dirac52}.
\begin{thm}\label{THM: Main1}
Let $k$ be a nonnegative integer.

(1) Every 3-graph $H$ on $n\ge 4k+19$ vertices with $\delta_1(H)\ge kn+6k^2-3k+3$ contains a linear path of length $2k+1$.

(2) Every 3-graph $H$ on $n\ge 4k+21$ vertices with $\delta_1(H)\ge kn+6k^2+7k+6$ contains a linear path of length $2k+2$.

\end{thm}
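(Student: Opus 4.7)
I would prove parts (1) and (2) by simultaneous induction on $k$. The base case $k=0$ is immediate: (1) asks only for an edge, which exists since $\delta_1(H)\ge 3>0$, and (2) asks for a linear $P_2$, which follows by picking any edge and using $\delta_1(H)\ge 6$ together with $n\ge 21$ to locate a second edge meeting it in exactly one vertex.

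For the inductive step of (1), note that the hypothesis $\delta_1(H)\ge kn+6k^2-3k+3$ dominates the condition $(k-1)n+6(k-1)^2+7(k-1)+6$ of part (2) for $k-1$ (using $n\ge 4k+19$), so by induction $H$ contains a linear path $P=e_1e_2\cdots e_{2k}$. The task is to extend $P$ by one more edge. Write $e_1=\{x_1,y_1,a_1\}$ with $a_1=e_1\cap e_2$ and $e_{2k}=\{a_{2k-1},x_{2k},y_{2k}\}$ with $a_{2k-1}=e_{2k-1}\cap e_{2k}$, so the four candidate end-vertices are $x_1,y_1,x_{2k},y_{2k}$. Assume toward a contradiction that no $P_{2k+1}$ exists. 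Then for each end $v$, the link graph $L_v$ (whose edges are the pairs $\{a,b\}$ with $\{v,a,b\}\in E(H)$) has no edge inside $V(H)\setminus V(P)$; otherwise such an edge could be prepended or appended to form $P_{2k+1}$. This yields the direct upper bound
\[
d_H(v)\le\binom{4k}{2}+4k(n-4k-1)=4kn-8k^2-6k,
\]
which exceeds $kn+6k^2-3k+3$ for $n\ge 4k+19$, so direct counting is insufficient.

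The central obstacle is closing the roughly factor-of-$4$ gap between this naive bound and the minimum-degree hypothesis. My plan is to sharpen the argument by a rotation-extension technique adapted to linear 3-uniform paths: for each "chord" $f=\{v,w,u\}\in E(H)$ with $v$ an end of $P$, $w\in V(P)\setminus\{v\}$, and $u\notin V(P)$, attempt to rewrite $P$ as a new longest linear path whose end-set differs from that of $P$, thereby enlarging the collection of vertices subject to the non-extension constraint. The delicate feature — new to the 3-graph setting — is that ad-hoc rotations typically cause a vertex to reappear in non-consecutive edges, violating linearity. The plan is therefore to restrict to \emph{safe} rotation patterns, such as replacing $e_1$ by an edge of the form $\{b,\gamma_1,\gamma_2\}$ with $b\in e_2\setminus e_3$ and $\gamma_1,\gamma_2\in V(H)\setminus V(P)$ (which preserves linearity automatically), and to track carefully which vertices accumulate in a set $S$ of "end-capable" vertices. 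Once $|S|$ grows to order $k$, the collective bound $|S|\delta_1(H)\le\sum_{s\in S}d_H(s)$ can be pushed below the naive per-vertex estimate after accounting for double-counted chord edges, yielding the required contradiction.

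Part (2) for $k$ would then follow by the same scheme: apply (1) for $k$ to obtain a linear $P_{2k+1}$, and re-run the rotation-extension argument under the stronger hypothesis $\delta_1(H)\ge kn+6k^2+7k+6$ to append one additional edge. The hard part throughout is organizing the rotations so that linearity is preserved at every step and so the end-capable set is large enough to close the gap between the naive counting bound and the degree hypothesis.
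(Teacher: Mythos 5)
Your plan correctly identifies the two main obstacles (the naive endpoint-degree count misses by a constant factor, and ad-hoc rotations in a $3$-uniform linear path destroy linearity), and your overall architecture --- settle the odd case first, then use it to seed the even case --- matches the paper's. But the proposal is a sketch of intent rather than a proof: the entire content of the argument is deferred to the sentence beginning ``The hard part throughout is organizing the rotations \dots'', and the mechanism you propose for closing the gap does not work as stated. If you grow a set $S$ of end-capable vertices with $|S|$ of order $k$, the collective bound $|S|\,\delta_1(H)\le\sum_{s\in S}d_H(s)$ gives roughly $k\cdot kn$ on the left against $|S|$ copies of the naive per-vertex bound $\approx 4kn$ on the right, i.e.\ $k^2n\le 4k^2n$ --- no contradiction. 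Summing over more endpoints only helps if you prove a concrete double-counting saving (edges counted once but charged to many members of $S$), and you assert this without exhibiting it. Moreover, your ``safe'' rotation $\{b,\gamma_1,\gamma_2\}$ with $\gamma_1,\gamma_2\notin V(P)$ consumes two fresh outside vertices per rotation and changes both end vertices of $e_1$ at once, so it is not clear that $S$ can be grown to size $\Theta(k)$ at all, nor that the non-extension constraint transfers to the new ends in a usable form.

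For contrast, the paper closes the gap by an entirely different extremal choice: among longest linear paths $P=(x_0,\dots,x_{2s})$ it maximizes $|M_P|$, where $M_P=\{i: d_P(2i,2i+2)\ge 2\}$ counts positions whose two even-indexed link vertices have at least two common neighbours off $P$. The factor-of-two (not four) saving comes from pairwise co-degree constraints at the two ends (Lemma 2.3(ii)--(iv), which rest on excluding the configuration $C_{t+1}^{+}$, Lemma 2.2): whenever $d_P(0,2k+1)$ and $d_P(2s,2k+1)$ are both positive their sum is at most $2$, and similarly for the pair $d_P(0,2k+2)$, $d_P(2s,2k)$, so each index contributes at most $n-2s-1$ to $d_H(x_0)+d_H(x_{2s})$ rather than $2(n-2s-1)$. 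This forces some $d_P(0,2k'+2)\ge\max\{2|M_P|+1,3\}$ with $k'\notin M_P$, which permits a single linearity-preserving reroute (reverse the initial segment through a new outside vertex) producing a path $P'$ of the same length with $|M_{P'}|=|M_P|+1$, contradicting maximality. If you want to salvage your approach, you would need to replace the vague ``end-capable set'' count with precise structural lemmas of this kind; as written, the proposal has a genuine gap at its centre.
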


The lower bound is tight up to an error term $O(k^2)$. To verify this, let $S_r(n,k)$ be the $r$-graph on vertex set $A\cup B$ with $|A|=k$ and $|B|=n-k$, and edge set $$E=\{e : e\subset A\cup B \mbox{ with $|e|=r$ and } e\cap A\not=\emptyset\};$$
let $C_r(n, s)$ be the $r$-graph with vertex set $S\cup T$ with $|S|=s$ and $|T|=n-s$, and edge set $$E=\{e : e\subset S\cup T \mbox{ with $|e|=r$ and } S\subset e\};$$
$S^+_r(n,k)$ is the $r$-graph obtained from $S_r(n,k)$ by embedding a copy of $C_r(n-k, 2)$ in $B$. The $r$-graphs $S_r(n,k)$ and $S^+_r(n,k)$ have also been defined by Kostochka, Mubayi, and Verstra\"{e}te~\cite{KMV_15} using a different notation.
The following proposition can be checked directly from the definitions of $S_r(n,k)$ and $S_r^+(n,k)$.

\begin{prop}\label{PROP: EX}
Let $k\ge 1$ be an integer.

(1) $S_3(n,k)$ is $P_{2k+1}$-free and $\delta_1(S_3(n,k))=kn-\frac{k^2}2-\frac{3k}2$;

(2) $S_3^+(n,k)$ is $P_{2k+2}$-free and $\delta_1(S_3^+(n,k))=kn-\frac{k^2}2-\frac{3k}2+1$.

\end{prop}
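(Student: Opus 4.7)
The plan is a direct verification from the definitions of $S_3(n,k)$ and $S_3^+(n,k)$, centered on an incidence-counting argument for linear paths.

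For part (1), to show $S_3(n,k)$ is $P_{2k+1}$-free, I observe that every edge meets $A$ in at least one vertex, and in any linear path $e_1,\dots,e_m$ each vertex of $A$ lies in at most two edges of the path (consecutive edges share exactly one vertex, nonconsecutive edges are disjoint). Double counting pairs $(a,e_i)$ with $a\in A\cap e_i$ gives $m\le \sum_{i=1}^{m}|e_i\cap A|\le 2|A|=2k$, so no linear path of length $2k+1$ can exist. For the degree, a vertex $u\in A$ satisfies $d(u)=\binom{n-1}{2}$ since every triple through $u$ is an edge, while a vertex $v\in B$ satisfies $d(v)=\binom{n-1}{2}-\binom{n-k-1}{2}$, the number of pairs in $V\setminus\{v\}$ that meet $A$. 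Expanding this difference yields $kn-\tfrac{k^2}{2}-\tfrac{3k}{2}$, which is smaller than $\binom{n-1}{2}$, so it equals $\delta_1(S_3(n,k))$.

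For part (2), the edge set of $S_3^+(n,k)$ is the disjoint union of the edges of $S_3(n,k)$ and of $C_3(n-k,2)$; the latter all contain a fixed pair $S\subset B$ of size $2$. Since any two edges containing $S$ already share two vertices, at most one edge from $C_3(n-k,2)$ can appear in any linear path. Applying the same incidence count as above to the remaining $\ge m-1$ edges (each meeting $A$) gives $m-1\le 2k$, so $m\le 2k+1$, proving $P_{2k+2}$-freeness. For the minimum degree, I would compute the extra contribution from the $C_3$-edges at each type of vertex: $0$ for $u\in A$ (no $C_3$-edge contains $u$), $n-k-2$ for $v\in S$ (all edges $S\cup\{w\}$ with $w\in B\setminus S$), and exactly $1$ for $v\in B\setminus S$ (the single edge $S\cup\{v\}$). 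Adding these to the $S_3$-degrees and comparing, the minimum is attained in $B\setminus S$ and equals $kn-\tfrac{k^2}{2}-\tfrac{3k}{2}+1$.

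There is no real obstacle; the only point that demands care is the observation that a vertex can belong to at most two edges of a linear path, which is exactly what makes the incidence bound $2|A|$ and yields the sharp length bounds $2k$ and $2k+1$ in the two parts.
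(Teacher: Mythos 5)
Your proof is correct and matches the paper's intent: the paper offers no written proof, stating only that the proposition ``can be checked directly from the definitions,'' and your direct verification (the incidence count $m\le\sum_i|e_i\cap A|\le 2|A|$ for path-freeness, plus the explicit degree computations $\binom{n-1}{2}-\binom{n-k-1}{2}=kn-\frac{k^2}{2}-\frac{3k}{2}$) supplies exactly the omitted details. The observation that at most one edge of the embedded $C_3(n-k,2)$ can occur in a linear path, since any two of its edges share the pair $S$, is the right way to handle part (2).
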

Proposition~\ref{PROP: EX} shows that the lower bound given in Theorem~\ref{THM: Main1} is tight up to a constant $c(k)$ depending on $k$. In fact, we believe that $S_3(n,k)$ and $S^+_3(n,k)$ are extremal graphs for $P_{2k+1}$-free and $P_{2k+2}$-free graphs with maximum minimum degree, respectively. We leave this as an open question.

The rest of the article is arranged as follows.
In Section 2, we give the proof of Theorem~\ref{THM: Main1}. We give some discussions and remarks in Section 3.

\section{Proof of Theorem~\ref{THM: Main1}}
For the special case $k=0$, we have better lower bounds than the ones given in Theorem~\ref{THM: Main1}.
\begin{lem}\label{LEM: k=0}
(1) Every 3-graph $H$ on $n\ge 3$ vertices with $\delta_1(H)\ge 1$ contains a linear path of length $1$.

(2) Every 3-graph $H$ on $n\ge 5$ vertices with $\delta_1(H)\ge 4$ contains a linear path of length $2$.

\end{lem}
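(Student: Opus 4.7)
For part (1), the conclusion is immediate: $\delta_1(H)\ge 1$ forces $E(H)\ne\emptyset$, and any single edge of a 3-graph is by definition a linear path of length 1. The hypothesis $n\ge 3$ is only there to ensure that a 3-edge can exist.

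For part (2), my plan is to argue by contradiction. Assume $H$ contains no linear $P_2$, so no two distinct edges of $H$ share exactly one vertex; equivalently, any two distinct edges share either $0$ or $2$ vertices. Fix a vertex $v$ with $d_H(v)\ge 4$ and form its \emph{link} graph $L_v$ on $V(H)\setminus\{v\}$, whose edges are the pairs $\{x,y\}$ such that $\{v,x,y\}\in E(H)$; note $|E(L_v)|=d_H(v)\ge 4$. If two edges of $L_v$ were vertex-disjoint, the corresponding 3-edges would intersect only in $v$, giving a forbidden $P_2$. Hence $L_v$ is an intersecting 2-graph.

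The next step is to invoke the standard classification of intersecting 2-graphs: any intersecting family of $2$-element sets with at least $4$ members is a star (the only alternative intersecting 2-graph, the triangle, has exactly $3$ edges; a short check shows any fourth edge pairwise meeting a triangle must repeat one of its edges). Applying this to $L_v$ yields a vertex $a$ contained in every edge of $L_v$, i.e., every 3-edge of $H$ through $v$ also contains $a$. Enumerate these edges as $e_i=\{v,a,b_i\}$ for $i=1,\dots,d_H(v)$ with $b_1,\dots,b_{d_H(v)}$ pairwise distinct.

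The final step is to bound $d_H(b_1)$ and obtain a contradiction. Any edge $g\ne e_1$ through $b_1$ must satisfy $|g\cap e_1|=2$, so $g$ contains $b_1$ together with either $v$ or $a$. If $g=\{v,b_1,x\}$, then $|g\cap e_2|\ge 2$ forces $x\in\{a,b_2\}$, and $x=a$ gives $g=e_1$, so $x=b_2$; but then $g\cap e_3=\{v\}$, a $P_2$. The case $g=\{a,b_1,x\}$ is symmetric and yields $g=\{a,b_1,b_2\}$, followed by $|g\cap e_3|=1$. Hence $d_H(b_1)=1<4$, contradicting $\delta_1(H)\ge 4$. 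The main obstacle is keeping the final case analysis clean; the star classification of $L_v$ is the conceptual engine that makes everything collapse to a two-line check.
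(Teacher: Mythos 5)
Your proof is correct and follows essentially the same route as the paper: both arguments fix a vertex of degree at least $4$, observe that $P_2$-freeness forces every pair of edges through it to meet in exactly two vertices (i.e., the link is intersecting), reduce to the star configuration $\{v,a,b_i\}$, and then derive a contradiction from a second edge through a peripheral vertex. The only difference is cosmetic: you invoke the star/triangle classification of intersecting $2$-graphs up front, whereas the paper carries out that case split by hand.
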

\begin{proof}
(1) It is trivial since every edge $e\in E(H)$ is a $P_1$ in $H$.

(2) Choose an edge $e_1=\{a,b,c_1\}\in E(H)$. Since $d_H(a)\ge \delta_1(H)\ge 4$, we can pick three distinct edges $e_2,e_3,e_4\in E(H)\setminus\{e_1\}$. If there exist $i,j\in [1,4]$ such that $|e_i\cap e_j|=1$, then $\{e_i,e_j\}$ induces a $P_2$ in $H$. So we assume that $|e_i\cap e_j|=2$ for all $i,j\in [1,4]$.  
Let $e_2=\{a,b,c_2\}$, where $c_2\neq c_1$.

 If $b\in e_3$ or $b\in e_4$, without loss of generality, assume $e_3=\{a,b,c_3\}$, where $c_1,c_2, c_3$ are pairwise distinct. Now, consider $c_1$, there must exist an edge $e'$ with $c_1\in e'$ and  $e'\not=e_1$. Clearly, $|e'\cap e_i|=2$ (otherwise, we have a $P_2$ in $H$) for $i=1,2,3$. Without loss of generality, assume $e'=\{a,c_1,d\}$. Then $d\not=b$.      So, at least one of $c_2, c_3$ is different from $d$, which contradicts to $|e'\cap e_i|=2$, $i=2,3$.

Now assume $b\not\in e_3$ and $b\notin e_4$. Since $|e_1\cap e_3|=2$ and $|e_2\cap e_3|=2$, we have $c_1, c_2\in e_3$, which means $e_3=\{a,c_1,c_2\}$. With the same reason, we have $e_4=\{a,c_1,c_2\}=e_3$, a contradiction.
\end{proof}

\noindent{\bf Remark.} $n\ge 5$ and $\delta_1(H)\ge 4$ is best possible. For example, the complete 3-graph $K_4^3$ has minimum degree 3 but does not contain a linear path of length two.

For a 3-graph $H$, we write $xyz\in E(H)$ for $\{x,y,z\}\in E(H)$, write $P_t=(x_0,x_1,...,x_{2t})$ for the linear path $P_t=\{x_0x_1x_2, x_2x_3x_4,...,x_{2t-2}x_{2t-1}x_{2t}\}$ in $H$, and for distinct $a,b\in\{0, 1,2, \ldots, 2t \}$, define $d_{P}(a,b):=|N_{H}(\{x_a,x_b\})\setminus V(P)|$.
A {\it linear $k$-cycle} in an $r$-graph, denoted by $C_k$, is a collection of $k$ edges $\{e_1,e_2,...,e_k\}$ such that $|e_i\cap e_j|=1$ if $|i-j|=1$ or $k-1$ and $e_i\cap e_j=\emptyset$ otherwise. Let $C_k^{+} $ be the $r$-graph, called a \emph{$k$-cycle with a parallel edge}, obtained from $C_k$ by adding a new vertex $v$ an edge $f$ with the property that $v\in f$ and there is an edge $e\in E(C_k)$  such that $(C_k-e)\cup\{f\}$ is also a linear cycle of length $k$ and $|f\cap e|=2$.
Define \begin{equation*}
g(n,t)=\left\{\begin{array}{ll}
         \frac{t-1}2n+\frac 32t^3-\frac 92t+6 & \mbox{ $t$ is odd,} \\
         \frac{t-2}2n+\frac 32t^3-\frac {5}2t+6 & \mbox{ $t$ is even.}
       \end{array}
       \right.
       \end{equation*}

\begin{lem}\label{LEM: Cycle}
Given integers $t\ge 3$ and $n\ge 2t+17$, let $H$ be a 3-graph on $n$ vertices and $\delta_1(H)\ge g(n,t)$. If $H$ is $P_{t+1}$-free then $H$ contains no $C_{t+1}^+$ as a subgraph.
\end{lem}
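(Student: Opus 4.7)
I propose to prove the lemma by contradiction. Assume $H$ is $P_{t+1}$-free and contains a copy $K$ of $C_{t+1}^+$; the goal is to extract a $P_{t+1}$ from $H$, contradicting the hypothesis.

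Label $K$ so that the underlying $(t+1)$-cycle $C$ has edges $e_i=\{v_{2i-1},v_{2i},v_{2i+1}\}$ for $i\in[1,t+1]$ (indices mod $2(t+1)$), and the parallel edge is $f=\{v_1,v_3,u\}$ with $u\notin V(C)$. The observation that drives the proof is that $C':=\{f,e_2,e_3,\ldots,e_{t+1}\}$ is a second linear $(t+1)$-cycle in $H$, differing from $C$ only by the swap $e_1\leftrightarrow f$. From either cycle, deleting a single edge yields a linear $P_t$, and ranging over the deleted edge and the chosen cycle produces a large family of $P_t$'s whose unique missing vertex in $V(K)\setminus V(P_t)$ runs through $\{v_2,v_4,\ldots,v_{2t+2},u\}$. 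Moreover, each such $P_t$ has two admissible endpoint choices on each side, since the first and last edges each contribute two non-shared vertices.

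For each pair $(P_t,y)$ with $y$ an admissible endpoint, the $P_{t+1}$-freeness of $H$ forces every edge $\{y,a,b\}\in E(H)$ to meet $V(P_t)\setminus\{y\}$; otherwise appending it to $P_t$ yields a $P_{t+1}$. Writing $W_y=V(P_t)\setminus\{y\}$ (of size $2t$), this gives
\[
d_H(y)\le\sum_{w\in W_y}d_H(\{y,w\}).
\]
The plan is to aggregate this inequality over a carefully chosen collection of $(P_t,y)$ pairs with a common $y$ (for instance $y\in\{u,v_2,v_3\}$, each of which is an admissible endpoint for several distinct $P_t$'s obtained by the swap $e_1\leftrightarrow f$ and by shifting which edge is deleted). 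Intersecting the sets $W_y$ across these $P_t$'s cuts down the collection of pairs $\{y,w\}$ that can possibly carry nontrivial codegree; what survives lies inside a bounded neighborhood of the parallel-edge zone, and the trivial bound $d_H(\{y,w\})\le n-2$ combined with a counting of overlaps then forces an upper bound that contradicts $d_H(y)\ge g(n,t)$.

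The main obstacle is the bookkeeping. The rotational symmetry of $C$ is broken by the parallel edge, so the case analysis splits according to whether the deleted edge is adjacent to the parallel-edge zone $\{e_1,e_2,e_{t+1},f\}$ or far from it; the endpoint options and the set of admissible missing vertices differ in each regime. The parity split in $g(n,t)$, namely coefficient $(t-1)/2$ for odd $t$ versus $(t-2)/2$ for even $t$, strongly suggests that one extra rotation is available for odd $t$, so the two parities should be handled in parallel with slightly different rotation collections. The hypothesis $n\ge 2t+17$ provides precisely the slack needed to absorb the $O(t^3)$ additive error in $g(n,t)$ arising from the repeated over-counting of pair-codegrees.
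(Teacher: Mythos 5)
Your overall strategy---use $P_{t+1}$-freeness to restrict the edges through a well-chosen vertex of the copy of $C_{t+1}^+$ and then count---is the right one, but the only mechanism you invoke is too weak to close the argument. The constraint ``every edge $\{y,a,b\}$ must meet $V(P_t)\setminus\{y\}$'' for an admissible endpoint $y$ gives only $d_H(y)\le\sum_{w\in W_y}d_H(\{y,w\})\le 2t(n-2)$. Moreover the parallel vertex $u$ is an endpoint of only \emph{two} of your paths (those obtained from $C'$ by deleting one of the two edges adjacent to $f$; since $u\notin V(C)$ and $f,e_1$ share two vertices, no other linear path inside the copy has $u$ as an endpoint), so intersecting the sets $W_u$ removes only two vertices. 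You are left with roughly $2t$ pairs $\{u,w\}$ with $w$ on the cycle, each of which your constraints still allow to have codegree as large as $n-2$; the resulting upper bound on $d_H(u)$ is of order $2tn$, which does not contradict $\delta_1(H)\ge g(n,t)\approx\frac{t-1}{2}n$. The hoped-for collapse to ``a bounded neighborhood of the parallel-edge zone'' simply does not occur under endpoint extension alone, and no bookkeeping over rotations or parities repairs this; the same difficulty arises for $y=v_2$ or $y=v_3$.

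The missing idea---which is essentially the whole content of the paper's much shorter proof---is that an edge through $u$ meeting the $(2t+2)$-element vertex set $X$ of the cycle in \emph{exactly one} vertex $x_i$ already produces a $P_{t+1}$: take that edge first and then unroll the entire cycle starting from $x_i$, which works whether $x_i$ is a connector vertex or an interior vertex of its cycle edge (in the latter case one enters that edge through its middle vertex). Together with the observation that an edge through $u$ disjoint from $X$ also produces a $P_{t+1}$ (enter the cycle through the parallel edge), this forces every $T\in N_H(u)$ to satisfy $|T\cap X|=2$, whence $d_H(u)\le\binom{2t+2}{2}$, a bound \emph{independent of $n$}, contradicting $d_H(u)\ge g(n,t)>\binom{2t+2}{2}$. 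In your codegree language: $d_H(\{u,w\})\le 2t+1$ for every $w\in X$, not $n-2$; that is the estimate your plan needs and does not supply. Finally, the parity split in $g(n,t)$ plays no role in this lemma (only $g(n,t)>\binom{2t+2}{2}$ is used), so your reading of it as ``one extra rotation for odd $t$'' is a red herring.
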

\begin{proof}
Suppose to the contrary that there exists a $C_{t+1}^+$ in $H$. Write $$C_{t+1}^+=(x_0,...,x_{2t})\cup\{x_0x_{2t+1}x_{2t}\}\cup\{x_0vx_{2t}\}.$$ Let $X=\{x_0,x_1,...,x_{2t},x_{2t+1}\}$.
Since $n\ge 2t+17$ and $t\ge 3$, we have $\delta_1(H)\ge g(n,t)>\binom{2t+2}{2}$.
\begin{claim}\label{CLAIM:c1}
There must exist an element  $T\in N_H(v)$ such that $|T\cap X|=1$.
\end{claim}
 We first claim that there is no $T\in N_H(v)$ such that $T\cap X=\emptyset$.
If not, assume that there is a $T$ with $|T\cap X|=0$. Let $e=\{v\}\cup T=\{v,v_1,v_2\}$. Then $(v_2,v_1,v,x_{2t},x_0,x_1,x_2,x_3,...,x_{2t-2})$ is a linear path of length $t+1$ in $H$, a contradiction. 
Now assume $|T\cap X|=2$ for any $T\in N_H(v)$.
Then
$d_H(v)\le \binom{2t+2}{2}<\delta_1(H)$, a contradiction. This completes the proof of the claim.

By Claim~\ref{CLAIM:c1}, we can choose a $T\in N_H(v)$ with $|T\cap X|=1$.
Let $T\cup\{v\}=\{v,v',x_i\}$, where $v'\notin X$ and $i\in [0, 2t+1]$.
If $i$ is even, then $$(v,v',x_i,x_{i+1},x_{i+2},...,x_{2t},x_{2t+1},x_0,x_1,...,x_{i-2})$$ is a linear path of length $t+1$ in $H$, a contradiction.
Now assume $i$ is odd. then $$(v,v',x_i,x_{i-1},x_{i+1},x_{i+2},...,x_{2t},x_{2t+1},x_0,x_1,...,x_{i-3})$$ is a linear path of length $t+1$ in $H$, a contradiction, too.
\end{proof}


\begin{lem}\label{PROP: 4}
Given integers $t\ge 3$ and $n\ge 2t+17$, let $H$ be a 3-graph on $n$ vertices and $\delta_1(H)\ge g(n,t)$. If $H$ is $P_{t+1}$-free and $P=(x_0,x_1,...,x_{2t})$ is a linear path of length $t$ in $H$, then the following statements hold:

(i) $d_P(0,2t)\le 1$;

(ii) if there is some $k\in [0,t-1]$ such that $d_P(a,2k+1)>0$ for $a\in \{0, 2t\}$, then $d_P(0, 2k+1)+d_P(2t,2k+1)\le 2$;


(iii) if there is some $k\in [0,t-1]$ such that $d_P(0,2k+2)>0$ and $d_P(2t,2k)>0$, then $d_P(0,2k+2)+d_P(2t,2k)\le 4$;

(iv) if there is some $k\in [0,t-1]$ and some $\ell\in\{0, t\}$ such that $d_P(2k,2k+2)>0$ and $d_P(2\ell,2k+1)>0$, then $d_P(2k,2k+2)+d_P(2\ell,2k+1)\le 2$.


\end{lem}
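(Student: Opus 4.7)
The plan is to prove each of (i)--(iv) by contradiction. In every case the assumed ``extra'' edges through pairs of $P$-vertices are used to build either a linear path of length $t+1$ in $H$ (contradicting the $P_{t+1}$-free hypothesis) or a $C_{t+1}^+$ subhypergraph (contradicting Lemma~\ref{LEM: Cycle}). The common template is to splice the assumed new edges into the edge list $e_1,\ldots,e_t$ of $P$, producing a new linear edge sequence in which consecutive edges share exactly one vertex and non-consecutive edges are disjoint.

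For (i), two distinct neighbors $y\ne y'$ of $\{x_0,x_{2t}\}$ outside $V(P)$ yield $\{e_1,\ldots,e_t,x_0 x_{2t} y\}=C_{t+1}$ and then $C_{t+1}\cup\{x_0 x_{2t} y'\}=C_{t+1}^+$, violating Lemma~\ref{LEM: Cycle}. For (ii), a pigeonhole on a sum $\ge 3$ (at least one of the two neighborhoods has size $\ge 2$) produces $y\in N_H(\{x_0,x_{2k+1}\})\setminus V(P)$ and $z\in N_H(\{x_{2t},x_{2k+1}\})\setminus V(P)$ with $y\ne z$, and then
\[
(x_{2k},x_{2k-1},\ldots,x_0,\,y,\,x_{2k+1},\,z,\,x_{2t},x_{2t-1},\ldots,x_{2k+2})
\]
with edge sequence $e_k,\ldots,e_1,\{x_0,x_{2k+1},y\},\{x_{2k+1},x_{2t},z\},e_t,\ldots,e_{k+2}$ is a $P_{t+1}$. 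Statement (iv) runs the same pigeonhole with $f=\{x_{2k},x_{2k+2},y\}$ acting as a ``replacement'' for $e_{k+1}$; for $\ell=0$ one takes
\[
(x_{2t},x_{2t-1},\ldots,x_{2k+2},\,y,\,x_{2k},x_{2k-1},\ldots,x_0,\,z,\,x_{2k+1})
\]
with edges $e_t,\ldots,e_{k+2},f,e_k,\ldots,e_1,\{x_0,x_{2k+1},z\}$, and the case $\ell=t$ is its mirror image.

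For (iii), a $P_{t+1}$ cannot be assembled from only two extra vertices: the two auxiliary edges $f=\{x_0,x_{2k+2},y\}$ and $g=\{x_{2t},x_{2k},z\}$ attach at vertices that already lie in two edges of $P$ ($x_{2k+2}\in e_{k+1}\cap e_{k+2}$ and $x_{2k}\in e_k\cap e_{k+1}$), whereas a linear path accommodates each vertex in at most two of its edges. The target is therefore $C_{t+1}^+$. From $y\ne z$ with $y\in N_H(\{x_0,x_{2k+2}\})\setminus V(P)$ and $z\in N_H(\{x_{2t},x_{2k}\})\setminus V(P)$, the edges
\[
e_1,\ldots,e_k,\ g,\ e_t,e_{t-1},\ldots,e_{k+2},\ f
\]
in cyclic order form a linear $(t+1)$-cycle (dropping exactly $e_{k+1}$, which absorbs the triple-vertex obstruction). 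When the sum is $\ge 5$, $\max\{d_P(0,2k+2),d_P(2t,2k)\}\ge 3$, so a third vertex $w\notin V(P)\cup\{y,z\}$ lies in the larger neighborhood, and the corresponding parallel edge ($\{x_0,x_{2k+2},w\}$ or $\{x_{2t},x_{2k},w\}$) promotes the cycle to $C_{t+1}^+$, again contradicting Lemma~\ref{LEM: Cycle}.

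The main obstacle I anticipate is verifying linearity of these paths and cycles at the boundary values $k=0$ and $k=t-1$, where the auxiliary edges $f$ or $g$ can share two vertices with $e_1$, $e_t$, or each other. In particular, in (iii) at $k=t-1$ the cycle degenerates to $e_1,\ldots,e_{t-1},g,f$, making $f$ and $g$ adjacent in the cycle, and one must check directly that $|f\cap g|=1$, which is exactly where $y\ne z$ is invoked. Once these boundary cases are in hand, the remainder of each part is a short pigeonhole for locating the required distinct vertices in the named pair-neighborhoods.
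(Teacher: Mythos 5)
Your proposal is correct and follows essentially the same route as the paper: parts (i) and (iii) are reduced to finding a $C_{t+1}^+$ (forbidden by Lemma~\ref{LEM: Cycle}), and parts (ii) and (iv) to splicing two new edges into $P$ to form a forbidden $P_{t+1}$, with the same pigeonhole arguments producing the distinct outside vertices; your cycle $e_1,\dots,e_k,g,e_t,\dots,e_{k+2},f$ in (iii) is exactly the paper's $(x_0,\dots,x_{2k},z,x_{2t},\dots,x_{2k+2})$ closed by the edges through $x_0,x_{2k+2}$. The paths in (ii) and (iv) are the paper's written in reverse, so no further comment is needed.
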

\begin{proof} By Lemma~\ref{LEM: Cycle}, $H$ is $C_{t+1}^+$-free.

(i) It is a direct corollary of Lemma~\ref{LEM: Cycle}.

(ii) Let $d_P(0, 2k+1)=i$ and $d_P(2t,2k+1)=j$. Then $i,j\ge 1$. Without loss of generality, assume $i\ge j$.  If $i+j\ge 3$ then $i\ge 2$. So there must exist $y\in N_H(\{x_0, x_{2k+1}\})\setminus V(P)$ and $z\in N_H(\{x_{2t}, x_{2k+1}\})\setminus V(P)$ such that $y\not=z$.
Therefore,
$(x_{2k+2}, x_{2k+3}, \ldots, x_{2t}, z, x_{2k+1}, y, x_0, x_1, \ldots, x_{2k})$ is a linear path of length $t+1$, a contradiction.

(iii) Let $d_P(0,2k+2)=i$ and $d_P(2t,2k)=j$. Then $i,j\ge 1$. Without loss of generality, assume $i\ge j$. If $i+j\ge 5$ then $i\ge 3$. So there must exist $y_1,y_2\in N_H(\{x_0, x_{2k+2}\})\setminus V(P)$ and $z\in N_H(\{x_{2t}, x_{2k}\})\setminus V(P)$ such that $z\notin\{y_1, y_2\}$.
Therefore, $(x_0,x_1,\ldots, x_{2k},z,x_{2t},x_{2t-1}, \ldots, x_{2k+2})\cup \{x_{2k+2}, y_1, x_0\}\cup \{x_{2k+2}, y_2, x_0\}$ is a copy of  $C_{t+1}^{+}$ in $H$, a contradiction.

(iv) Without loss of generality, assume $\ell=0$. Let $d_P(2k,2k+2)=i$ and $d_P(0,2k+1)=j$. Then $i,j\ge 1$. Without loss of generality, assume $i\ge j$. If $i+j\ge 3$ then $i\ge 2$. So we can pick  $y\in N_H(\{x_{2k}, x_{2k+2}\})\setminus V(P)$ and $z\in N_H(\{x_{0}, x_{2k+1}\})\setminus V(P)$ with $y\neq z$.
Therefore, $(x_{2k+1},z,x_0,x_1,\ldots,x_{2k},y,x_{2k+2},x_{2k+3},...,x_{2t})$ is a linear path in $H$ of length $t+1$, a contradiction.
\end{proof}

The following lemma is a corollary of Lemma~\ref{PROP: 4}
\begin{lem}\label{COR: 4}
Given integers $t\ge 3$ and $n\ge 2t+17$, let $H$ be a 3-graph on $n$ vertices and $\delta_1(H)\ge g(n,t)$. If $H$ is $P_{t+1}$-free and $P=(x_0,x_1,\ldots,x_{2t})$ is a linear path of length $t$ in $H$. Then the following statements hold:


(a) $d_P(0, 2k+1)+d_P(2t,2k+1)\le n-2t-1$ for all $k\in [0, t-1]$;


(b) $d_P(0,2k+2)+d_P(2t,2k)\le n-2t-1$ for all $k\in [0, t-1]$.

\end{lem}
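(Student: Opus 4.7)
The plan is to deduce Lemma~\ref{COR: 4} as a direct packaging of Lemma~\ref{PROP: 4}, using the trivial vertex-counting bound $d_P(a,b) \le n - 2t - 1$ to cover the degenerate cases where one of the two degrees in the sum vanishes.

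First I would record the trivial observation that $N_H(\{x_a, x_b\}) \setminus V(P)$ is a subset of $V(H) \setminus V(P)$, a set of cardinality $n - (2t+1) = n - 2t - 1$. Hence every individual $d_P(a,b)$ is automatically bounded above by $n - 2t - 1$, irrespective of any structural information.

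For part (a), fix $k \in [0, t-1]$ and split on whether $d_P(0, 2k+1)$ and $d_P(2t, 2k+1)$ are both strictly positive. In the both-positive case, Lemma~\ref{PROP: 4}(ii) gives the sharp bound $d_P(0, 2k+1) + d_P(2t, 2k+1) \le 2$, and under the standing hypothesis $n \ge 2t + 17$ we have $2 \le 16 \le n - 2t - 1$, so the claim holds. If instead one of the two terms is zero, the sum reduces to the surviving term, which by the trivial observation is at most $n - 2t - 1$. Part (b) proceeds in exactly the same way for each $k \in [0, t-1]$, invoking Lemma~\ref{PROP: 4}(iii) in the both-positive case; the constant there is $4$ rather than $2$, which still sits comfortably below $n - 2t - 1$.

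The main obstacle is essentially nothing: the statement is a uniform repackaging, and the only point worth noting is that the hypothesis $n \ge 2t + 17$ is generous enough to absorb the small constants $2$ and $4$ produced by Lemma~\ref{PROP: 4}. This is why the corollary can be stated with the common right-hand side $n - 2t - 1$, which is exactly what subsequent counting arguments will want to combine with $\delta_1(H) \ge g(n,t)$.
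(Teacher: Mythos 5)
Your proposal is correct and matches the paper's own proof essentially verbatim: both handle the case where one term vanishes via the trivial bound $d_P(a,b)\le n-2t-1$, and otherwise invoke parts (ii) and (iii) of Lemma~\ref{PROP: 4} together with $n\ge 2t+17$. No gaps.
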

\begin{proof}
If one of $d_P(0, h)$ and $d_P(2t, \ell)$ is zero, say $d_P(0,h)=0$, then $d_P(0,h)+d_P(2t,\ell)=0+|N_{H}(\{x_{2t},x_{\ell}\})\setminus V(P)|\le n-2t-1$. So, to prove (a) and (b), it is sufficient to assume  that both $d_P(0,h)$ and $d_P(2t,\ell)$ are positive for $h\in\{2k+1, 2k+2\}$ and $\ell\in\{2k, 2k+1\}$.

(a) Since both $d_P(0,2k+1)$ and $d_P(2t,2k+1)$ are positive, by (ii) of Lemma~\ref{PROP: 4},  we have $d_P(0,2k+1)+d_P(2t,2k+1)\le 2\le n-2t-1$.

(b) Since $d_P(0,2k+2)$ and $d_P(2t,2k)$ are positive, by (iii) of Lemma~\ref{PROP: 4},  we have $d_P(0,2k+2)+d_P(2t,2k)\le 4\le n-2t-1$.
\end{proof}


We first give a weak version of Theorem~\ref{THM: Main1}. For a linear path $P=(x_0,x_1,...,x_{2s})$ in a 3-graph $H$, define $M_P=\{i\in [0, s-1] : d_P(2i,2i+2)\ge 2\}$.

\begin{thm}\label{THM: Weak}
Given integers $t\ge 3$ and $n\ge 2t+17$, every 3-graph $H$ on $n$ vertices with
\begin{equation*}
\delta_{1}(H)\ge\frac{t-1}{2}n+\frac 32t^2-\frac{9}{2}t+6
\end{equation*}
contains a linear path $P_t$ as a subgraph.
\end{thm}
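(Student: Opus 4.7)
My plan is proof by contradiction: assume $H$ is $P_t$-free, and let $P=(x_0,x_1,\ldots,x_{2s})$ be a longest linear path of $H$, so $s\le t-1$. Small cases of $s$ can be ruled out directly, since a very short longest path forces $\delta_1(H)$ to be too small. The hypothesis on $\delta_1(H)$ easily implies $\delta_1(H)>\binom{2s+2}{2}$, so Lemma~\ref{LEM: Cycle} (applied with its ``$t$'' equal to $s$) gives that $H$ is $C_{s+1}^+$-free, and therefore Lemmas~\ref{PROP: 4} and~\ref{COR: 4} are available for $P$.

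The central step is to bound $d_H(x_0)+d_H(x_{2s})\ge 2\delta_1(H)$. Since $P$ is longest, any edge $\{x_0,y,z\}$ with $y,z\notin V(P)$ would allow prepending to form a $P_{s+1}$, contradicting maximality. Hence
\[
d_H(x_0)\le\binom{2s}{2}+\sum_{i=1}^{2s}d_P(0,i),
\]
and similarly for $x_{2s}$. Summing, splitting the index sum by parity of $i$, and using Lemma~\ref{COR: 4}(a) on the odd part and (b) on the even part yields an upper bound on $2\delta_1(H)$, which should force $s=t-1$ and then a contradiction with the hypothesis.

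The main obstacle is that the naive combination of (a) and (b) produces only $\delta_1(H)\le s(n-2)$, which is a factor of $2$ larger than the target $\frac{t-1}{2}n+O(t^2)$. To close this gap I would invoke the sharper inequalities in Lemma~\ref{PROP: 4}(ii)--(iii): when two $d_P$ values in a pair are both positive, their sum is at most $2$ or $4$. In particular, Lemma~\ref{COR: 4}(a) is wasteful on odd indices $2k+1$ precisely when exactly one of $d_P(0,2k+1),d_P(2s,2k+1)$ is zero. Here I would use the set $M_P$ introduced just before the theorem: if $i\in M_P$, the two parallel edges through $x_{2i},x_{2i+2}$ enable a rotation that swaps one interior edge of $P$ for another, producing a new $P_s$ with different endpoints and, after the swap, a strictly smaller odd-index contribution. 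Selecting $P$ to minimize this contribution over all longest linear paths should drive the odd-index sum down to $O(t^2)$, and combined with the even-index bound from Lemma~\ref{COR: 4}(b) this yields $\delta_1(H)\le\frac{t-1}{2}n+O(t^2)$, contradicting the hypothesis. The hard part will be making the rotation/choice of $P$ explicit enough to certify the $O(t^2)$ bound on the odd-index sum.
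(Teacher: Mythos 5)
Your skeleton matches the paper's: contradiction, a longest linear path $P=(x_0,\dots,x_{2s})$, the endpoint bound $d_H(x_0)\le\binom{2s}{2}+\sum_{i=1}^{2s}d_P(0,i)$, the pairing of indices via Lemma~\ref{COR: 4}, and the correct diagnosis that naively this only gives $\delta_1(H)\le s(n-2)$, a factor of $2$ too weak. But the mechanism you propose to close that gap is not the paper's and, as stated, has a genuine hole. You plan to drive the \emph{odd-index} sum down to $O(t^2)$ by choosing $P$ to minimize that sum and performing rotations at indices $i\in M_P$; however, you give no argument that a large odd-index sum actually enables a sum-decreasing rotation, and it is not clear why the existence of two parallel edges at $(x_{2i},x_{2i+2})$ would reduce the odd-index contribution of the rotated path. ``Minimize the odd sum over all longest paths'' only helps if you can prove the minimum is $O(t^2)$, and that is precisely the unproved step.

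The paper's actual savings come from a \emph{static} use of Lemma~\ref{PROP: 4}(iv), not from a rotation: if $k\in M_P$ then $d_P(2k,2k+2)\ge 2$, and (iv) forces $d_P(0,2k+1)=d_P(2s,2k+1)=0$ outright. So the odd pairs contribute $(s-|M_P|)(n-2s-1)$ (only over $T=[0,s-1]\setminus M_P$), the even pairs over $M_P$ contribute $|M_P|(n-2s-1)$, and the total linear-in-$n$ part is $s(n-2s-1)$ --- exactly half the naive bound, for \emph{any} value of $|M_P|$. The residual quantity is the even-index sum over $T$, which the degree hypothesis then forces to be at least $s^2+6$; this produces some $k'\in T$ with $d_P(0,2k'+2)\ge\max\{2|M_P|+1,3\}$, and the rotation (reversing the initial segment and inserting an outside neighbour $v$ of $\{x_0,x_{2k'+2}\}$ chosen to avoid the tight pairs) yields a path $P'$ of the same length with $|M_{P'}|=|M_P|+1$. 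The extremal choice is therefore to \emph{maximize} $|M_P|$ among longest paths, and the contradiction is with that maximality --- not with a minimality of the odd-index sum. You have the right ingredients ($M_P$, rotations, the sharper bounds (ii)--(iv)), but you assign them the wrong roles, and the step you flag as ``the hard part'' is indeed where your argument currently fails.
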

\begin{proof}
Suppose to the contrary that $H$ is $P_{t}$-free. Let $P=(x_0,x_1,...,x_{2s})$ be a longest linear path such that $|M_P|$ has maximum value.
Then $s<t$. Let $T=[0,s-1]\setminus M_P$.

\begin{claim}\label{CLAIM:c2}
For any $X\in N_{H}(x_i)$, $i\in\{0,1, 2s-1,2s\}$, we have $|X\cap V(P)|\ge 1$.
\end{claim}

In fact, if there is an $X\in N_{H}(x_i)$  such that $|X\cap V(P)|=0$ for some $i\in\{0,1, 2s-1,2s\}$, then $P\cup (X\cup\{x_i\})$ is a linear path of length $s+1$, a contradiction.

By Claim~\ref{CLAIM:c2}, we have
\begin{equation}\label{EQ: e1}
d_H(x_0)=\left|N_{H}(x_0)\cap\binom{V(P)}{2}\right|+\sum_{i=1}^{2s}d_P(0,i)\le\binom{2s}{2}+\sum_{i=1}^{2s}d_P(0,i)\mbox{.}
\end{equation}
Similarly, we have $d_H(x_{2s})\le\binom{2s}{2}+\sum\limits_{i=0}^{2s-1}d_P(2s,i)$. So we have
\begin{equation*}
\begin{split}
2\delta_1(H)&\le d_H(x_0)+d_H(x_{2s})\le 2\binom{2s}{2}+\sum_{i=1}^{2s}d_P(0,i)+\sum_{j=0}^{2s-1}d_P(2s,i)\\
 &= 2\binom{2s}{2}+\sum_{k=0}^{s-1}\left[d_P(0,2k+1)+d_P(2s,2k+1)\right]\\
 &\ \  +\sum_{k=0}^{s-1}\left[d_P(0,2k+2)+d_P(2s,2k)\right]\\
            &= 2\binom{2s}{2}+\left(\sum_{k\in M_P}+\sum_{k\in T}\right)\left[d_P(0,2k+1)+d_P(2s,2k+1)\right]\\
            &\ \ +\left(\sum_{k\in M_P}+\sum_{k\in T}\right)\left[d_P(0,2k+2)+d_P(2s,2k)\right]\\
            &\le  2\binom{2s}{2}+0+(s-|M_P|)(n-2s-1)+|M_P|(n-2s-1)\\
            & \ \ \ \ +\sum_{k\in T}\left[d_P(0,2k+2)+d_P(2s,2k)\right]\\
            &= 2\binom{2s}{2}+\sum_{k\in T}\left[d_P(0,2k+2)+d_P(2s,2k)\right]+s(n-2s-1)\mbox{,}
\end{split}
\end{equation*}
where the last inequality holds because $d_P(0,2k+1)=d_P(2s,2k+1)=0$ for $k\in M_P$ by (iv) of Lemma~\ref{PROP: 4},  and $d_P(0,2k+1)+d_P(2s,2k+1)\le n-2s-1$, $d_P(0,2k+2)+d_P(2s,2k)\le n-2s-1$ by Lemma~\ref{COR: 4}.
Therefore,
\begin{equation*}
\begin{split}
& \sum_{k\in T}\left[d_P(0,2k+2)+d_P(2s,2k)\right] \ge 2\delta_1(H)-s(n-2s-1)- 2\binom{2s}{2}\\
                                      &\ \ \ \ \ \ \ \ge (t-1)n+3t^2-9t+12-s(n-2s-1)- 2\binom{2s}{2}\\
                                      &\ \ \ \ \ \ \ \ \ge sn+3s^2-3s+6-s(n-2s-1)-2s(2s-1)\\
                                      &\ \ \ \ \ \ \ \ = s^2+6\mbox{.}
\end{split}
\end{equation*}
This implies that there must exist a $k'\in T$ such that $\max\{d_P(0,2k'+2),d_P(2s,2k')\}$ $\ge \max\{2|M_P|+1,3\}$ (otherwise, if $|M_P|>0$ then $$\sum_{k\in T}\left[d_P(0,2k+2)+d_P(2s,2k)\right]\le\sum_{k\in T}(2|M_P|+2|M_P|)=4|M_P|(s-|M_p|)<s^2+6,$$ a contradiction;
if $|M_P|=0$ then $$\sum_{k\in T}\left[d_P(0,2k+2)+d_P(2s,2k)\right]\le\sum_{k\in T}(2+2)=4s<s^2+6,$$  a contradiction too.)
Without loss of generality, assume $d_P(0,2k'+2)\ge\max\{2|M_P|+1, 3\}$. By pigeon hole principle, we can choose a vertex $v\in N_{H}(\{x_0,x_{2k'+2}\})\setminus V(P)$ such that $v\notin N_H(x_{2k}, x_{2k+2})\setminus V(P)$ for each $k\in M_P$ with $d_P(2k, 2k+2)=2$. Now we set $y_{i}=x_{2k'-i}$ for $i\in [0,2k']$, and $y_{2k'+1}=v_1$ and $y_{j}=x_{j}$ for $j\in [2k+2, 2s]$. Then $P'=(y_0,y_1,...,y_{2s})$ is a linear path of length $s$ in $H$.
Since $|N_H(\{y_{2k'}, y_{2k'+2}\})\setminus V(P')|=|N_H(\{x_{0}, x_{2k'+2}\})\setminus V(P)|-1\ge 2$, we have $k'\in M_{P'}$. For $k> k'$ and $k\in M_P$, if $d_P(2k, 2k+2)\ge 3$ then $d_{P'}(2k, 2k+2)\ge d_P(2k, 2k+2)-1\ge 2$, so $k\in M_{P'}$; if $d_P(2k, 2k+2)=2$ then $d_{P'}(2k, 2k+2)=d_P(2k, 2k+2)=2$, so $k\in M_{P'}$. For $k\le k'-1$ and $k\in M_P$, with a similar discussion with $k>k'$, we have $k'-k-1\in M_{P'}$.
Therefore, we have $|M_{P'}|=|M_P|+1$, a contradiction to the maximality of $M_P$.
\end{proof}

Clearly, (1) of Theorem~\ref{THM: Main1} follows from Theorem~\ref{THM: Weak} by taking $t=2k+1$. So, in the following, we prove the case when $t$ is even.

\begin{thm}\label{THM: Even}
Given positive integers $k$ and $n\ge 4k+21$, every 3-graph $H$ with $\delta_{1}(H)\ge kn+6k^2+7k+6$ contains a linear path $P_{2k+2}$ as a subgraph.
\end{thm}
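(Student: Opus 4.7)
The plan is to apply Theorem~\ref{THM: Weak} with $t=2k+1$ to first obtain a linear path of length $2k+1$, and then to derive a contradiction via a sharpening of the endpoint-counting argument used in that proof. The hypothesis $\delta_1(H)\ge kn+6k^2+7k+6$ exceeds the $kn+6k^2-3k+3$ required by Theorem~\ref{THM: Weak} at $t=2k+1$, so such a path exists. Suppose for contradiction that $H$ is $P_{2k+2}$-free; by Lemma~\ref{LEM: Cycle} it is also $C_{2k+2}^+$-free, and Lemmas~\ref{PROP: 4} and \ref{COR: 4} apply with $t=2k+1$. Fix a longest linear path $P=(x_0,\ldots,x_{4k+2})$ (necessarily of length $s=2k+1$) with $|M_P|$ maximum.

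Since $P$ is a longest linear path, Claim~\ref{CLAIM:c2} in the proof of Theorem~\ref{THM: Weak} still applies, and the same double counting of $d_H(x_0)+d_H(x_{4k+2})$ yields
\[
\sum_{k'=0}^{2k}\bigl[d_P(0,2k'+1)+d_P(4k+2,2k'+1)\bigr]+\sum_{k'=0}^{2k}\bigl[d_P(0,2k'+2)+d_P(4k+2,2k')\bigr]\ge 2\delta_1(H)-2\binom{4k+2}{2},
\]
and for $n\ge 4k+21$ the right-hand side is at least $4k^2+44k+10$. Unlike in Theorem~\ref{THM: Weak}, our hypothesis only provides a $2kn$ leading term in $2\delta_1(H)$ (rather than the $(2k+1)n$ available to Theorem~\ref{THM: Weak} at $t=2k+2$), so the trivial bound $n-2s-1$ on each bracket is now insufficient; we must show that every bracket is bounded by a quantity depending only on $k$.

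The main new ingredient is an extension of the rotation in the final paragraph of the proof of Theorem~\ref{THM: Weak} that eliminates all ``one-sided'' brackets. For $k'\in M_P$, Lemma~\ref{PROP: 4}(iv) already forces the odd bracket to vanish; for the remaining brackets, Lemma~\ref{PROP: 4}(ii)--(iii) give $\le 2$ and $\le 4$ respectively when both summands are positive. The delicate case is when exactly one summand is positive and potentially as large as $n-2s-1$, e.g.\ $d_P(0,2k'+1)\ge 2|M_P|+c$ with $d_P(4k+2,2k'+1)=0$ for some absolute constant $c$. Here I plan to pigeonhole over the witnesses $v\notin V(P)$ with $\{x_0,v,x_{2k'+1}\}\in E(H)$ to select one avoiding each obstructing set $N_H(\{x_{2k''},x_{2k''+2}\})$ for $k''\in M_P$ with $d_P(2k'',2k''+2)=2$, and then re-root $P$ through $v$ to produce a linear path $P'$ of length $2k+1$ with $|M_{P'}|>|M_P|$, contradicting the choice of $P$. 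Symmetric rotations handle the three other types of one-sided brackets.

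With every bracket thereby bounded by $O(|M_P|+1)$, a careful bookkeeping (analogous to the one in the proof of Theorem~\ref{THM: Weak}) shows that the total bracket sum cannot reach $4k^2+44k+10$, yielding the desired contradiction. The principal obstacle is the one-sided rotation for odd brackets: since the new edge $\{x_0,v,x_{2k'+1}\}$ meets $P$ at an odd-indexed interior vertex $x_{2k'+1}$ rather than at an endpoint, the construction of $P'$ must be carried out case-by-case according to the position of $k'$ in $P$, to ensure that $P'$ remains a linear path of length exactly $2k+1$ while strictly increasing $|M_{P'}|$.
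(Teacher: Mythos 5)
Your setup is fine (Theorem~\ref{THM: Weak} at $t=2k+1$ does supply the starting path, and the arithmetic $2\delta_1(H)-2\binom{4k+2}{2}\ge 4k^2+44k+10$ is correct), but the heart of your argument --- that every one-sided bracket can be forced down to $O(|M_P|+1)$ by a rotation that strictly increases $|M_P|$ --- does not hold, and this is exactly where the paper does something different. At least two of your four cases fail. For a large one-sided even bracket at an index $i\in M_P$ (say $d_P(0,2i+2)\ge 3$ and $d_P(4k+2,2i)=0$), re-rooting through a witness $v\in N_H(\{x_0,x_{2i+2}\})\setminus V(P)$ destroys the connector pair $\{x_{2i},x_{2i+2}\}$, which by assumption lies in $M_P$, while creating only the new pair $\{x_0,x_{2i+2}\}$; the net change in $|M_P|$ can be zero, so maximality gives no contradiction, and nothing in Lemmas~\ref{PROP: 4} and~\ref{COR: 4} then prevents this bracket from being as large as $n-4k-3$. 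For a large one-sided odd bracket $d_P(0,2k'+1)$, any length-$(2k+1)$ path using the edge $\{x_0,v,x_{2k'+1}\}$ must make $x_{2k'+1}$ a connector adjacent to the edge $x_{2k'}x_{2k'+1}x_{2k'+2}$, which demotes one of $x_{2k'},x_{2k'+2}$ from connector to interior vertex and thereby kills up to two pairs of $M_P$ while gaining one; again $|M_{P'}|>|M_P|$ is not guaranteed. Finally, even granting that every bracket is at most $2(2|M_P|+c)$, summing over all $4k+2$ brackets gives a bound of order $16k^2$, which does not contradict $4k^2+44k+10$; the saving $4|M_P|(s-|M_P|)\le s^2$ exploited in Theorem~\ref{THM: Weak} is available only because there the sum is restricted to the $s-|M_P|$ indices of $T$.

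The paper's proof avoids all of this with a parity observation your proposal is missing. Let $N_0=\{i\in M_P: d_P(0,2i+2)\ge 3\}\cup\{i\in T: d_P(0,2i+1)\ge 2\}$ collect the indices where $x_0$ has a large one-sided bracket, and define $N_{2\ell}$ symmetrically for the other endpoint. Parts (ii) and (iii) of Lemma~\ref{PROP: 4} force $N_0\cap N_{2\ell}=\emptyset$, so $|N_0|+|N_{2\ell}|\le\ell=2k+1$, and since $\ell$ is odd one endpoint, say $x_0$, has $|N_0|\le k$. One then bounds $d_H(x_0)$ \emph{alone}: the at most $k$ uncontrolled brackets contribute at most $k(n-2\ell-1)$, which is absorbed by the $kn$ term in $\delta_1(H)$, and the surplus $\sum_{i\in T}d_P(0,2i+2)\ge 2k^2+2k+3$ produces a $j\in T$ with $d_P(0,2j+2)\ge\max\{2|M_P|+1,3\}$, to which the legitimate, $T$-restricted rotation of Theorem~\ref{THM: Weak} applies. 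You should rebuild your argument around this disjointness rather than trying to rotate away the odd and $M_P$-indexed one-sided brackets.
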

\begin{proof}
Clearly, $\delta_1(H)\ge kn+6k^2+7k+6\ge kn+6k^2-3k+3$. By (1) of Theorem~\ref{THM: Main1}, $H$ contains a linear path $P_{2k+1}$. Let $P=\{x_0,x_1,...,x_{2\ell}\}$ be a longest linear path in $H$ such that $|M_P|$ has maximum value. If $\ell\ge 2k+2$ then we are done.  Now assume $\ell=2k+1$. 
By (ii) and (iii) of Lemma~\ref{PROP: 4},  $\min\{d_P(0,2i+2),d_P(2\ell,2i)\}\le 2$ for each $i\in M_P$, and  $\min\{d_P(0,2i+1),d_P(2\ell,2i+1)\}\le 1$ for every $i\in [0,\ell-1]\setminus M_P$. Let $T=[0, \ell-1]\setminus M_P$ and
$$N_0=\{i\in M_P: d_P(0,2i+2)\ge 3\}\cup\{i\in T: d_P(0,2i+1)\ge 2\},$$
$$N_{2\ell}=\{i\in M_P: d_P(2\ell,2i)\ge 3\}\cup\{i\in T: d_P(2\ell,2i+1)\ge 2\}.$$
Clearly, we have $N_0\cap N_{2\ell}=\emptyset$. So $|N_0|+|N_{2\ell}|\le \ell$.
Since $\ell=2k+1$, at least one of $|N_0|, |N_{2\ell}|$ is at most $k$. Without loss of generality, assume $|N_0|\le k$.
Let $M_0=[0, \ell-1]\setminus N_0$. Then, similar to Inequality~(\ref{EQ: e1}) in the proof of Theorem~\ref{THM: Weak}, we have
\begin{equation*}
\begin{split}
d_H(x_0)
            &\le\binom{2\ell}{2}+\sum_{i=1}^{2\ell}d_P(0,i)\\
            &=\binom{2\ell}{2}+\left(\sum_{i\in N_0\cap T}+\sum_{i\in M_0\cap T}+\sum_{i\in M_P}\right)d_P(0,2i+1)\\
            &\ \ \ \  + \left(\sum_{i\in N_0\cap M_P}+\sum_{i\in M_0\cap M_P}+\sum_{i\in T}\right)d_P(0,2i+2)\\
            &\le\binom{2\ell}{2}+(n-2\ell-1)|N_0\cap T|+ |M_0\cap T|+0\\
            &+(n-2\ell-1)|N_0\cap M_P|+2|M_0\cap M_P|+\sum_{i\in T}d_P(0,2i+2)\\
            &\le\binom{2\ell}{2}+(n-2\ell-1)|N_0|+2(\ell-|N_0|)+\sum_{i\in T}d_P(0,2i+2)\\
            &\le \ell(2\ell-1)+(n-2\ell-3)k+2\ell+\sum_{i\in T}d_P(0,2i+2)\\
            &\le kn+4k^2+5k+3+\sum_{i\in T}d_P(0,2i+2)\mbox{,}
\end{split}
\end{equation*}
where the second inequality holds because $d_P(0,2i+1)=0$ for any $i\in M_P$ by (iv) of Lemma~\ref{PROP: 4},  and $d_P(0,2i+1)\le n-2s-1$, $d_P(0,2i+2)\le n-2s-1$ by Lemma~\ref{COR: 4}.
Therefore,
\begin{equation*}
\begin{split}
\sum_{i\in T}d_P(0,2i+2)&\ge d_H(x_0)-\left(kn+4k^2+5k+3\right)\\
                        &\ge \delta_1(H)-(kn+4k^2+5k+3)\\
                        & \ge 2k^2+2k+3.
\end{split}
\end{equation*}
This means there exists a $j\in T$ such that $d_P(0,2j+2)\ge \max\{2|M_P|+1,3\}$ (otherwise, if $|M_P|>0$ then $$\sum_{i\in T}d_P(0,2i+2)\le\sum_{i\in T}2|M_P|=2|M_P|(\ell-|M_p|)<2k^2+2k+3,$$ a contradiction;
if $|M_P|=0$ then $$\sum_{i\in T}d_P(0,2i+2)\le\sum_{i\in T}2=2(\ell-|M_P|)<4k+2<2k^2+2k+3,$$  a contradiction too.)
Now with the similar discussion as in the proof of Theorem~\ref{THM: Weak}, we can find a linear path $P'$ of length $2\ell$ with $|K_{P'}|>|K_P|$, which is a contradiction to the choice of $P$.
\end{proof}

Theorem~\ref{THM: Main1} follows directly from Lemma~\ref{LEM: k=0}, Theorems~\ref{THM: Weak} and~\ref{THM: Even}.

\section{Concluding remarks}

In this article, we give an asymptotic upper bound of minimum degree for 3-graphs containing no linear path of specific length. Although the bound is tight up to a constant, we have few information about the extremal 3-graphs through our proofs at this stage. In fact, we believe that $S_3(n,k)$ and $S^+_3(n,k)$ are extremal 3-graphs for $P_{2k+1}$-free and $P_{2k+2}$-free graphs with maximum minimum degree, respectively. We leave this as an open question.

\end{document}